\newtheorem{theorem}{Theorem}[section]
\newtheorem{lemma}[theorem]{Lemma}
\newtheorem{remark}[theorem]{Remark}
\newenvironment{proof}[1][Proof]{\textbf{#1.} }{\hfill\rule{0.5em}{0.5em}}
{\catcode`\@=11\global\let\AddToReset=\@addtoreset
\AddToReset{equation}{section}

\AddToReset{theorem}{section}


\def\bq{\begin{eqnarray}}
\def\eq{\end{eqnarray}}
\def\bqq{\begin{eqnarray*}}
\def\eqq{\end{eqnarray*}}

\def \supp {\rm supp}

\begin{document}
\title{ Brezis-Gallouet-Wainger type inequality with critical  fractional Sobolev space and BMO}
\author{
 {\bf  Nguyen-Anh Dao \thanks{ Applied Analysis Research Group, Faculty of Mathematics and Statistics,
  		Ton Duc Thang University, Ho Chi Minh City, Vietnam. Email address: daonguyenanh@tdt.edu.vn } , ~Quoc-Hung Nguyen\thanks{Scuola Normale Superiore, Centro Ennio de Giorgi, Piazza dei Cavalieri 3, I-56100 Pisa, Italy.  Email address: quoc-hung.nguyen@sns.it}}\\[0.5mm]
} 
\maketitle
          \textbf{Abstract.}
          In this paper, we prove    the Brezis-Gallouet-Wainger type inequality involving  the BMO norm, the fractional Sobolev norm, and the logarithmic norm of $\mathcal{\dot{C}}^\eta$, for $\eta\in(0,1)$.

 \section{Introduction and main results} 
\hspace*{0.2in}  
The main purpose of this paper is to  established    $L^\infty$-bound by means of the BMO norm, or  the critical fractional Sobolev  norm with the logarithm of   $\mathcal{\dot{C}}^{\eta}$ norm. Such a $L^\infty$-estimate of this type  is known as  the Brezis-Gallouet-Waigner (BGW) type inequality.  Let us remind that Brezis-Gallouet \cite{BreGa}, and Brezis-Wainger \cite{BreWa} considered the relation between $L^\infty$, $W^{k,r}$, and $W^{s,p}$, and proved that there holds
\begin{equation}\label{1}
\|f\|_{L^\infty}\leq C\left(   1+  \log^\frac{r-1}{r}  \left(1+\|f\|_{W^{s,p}}\right) \right), \quad sp>n
\end{equation}
provided $\|f\|_{W^{k,r}}\leq 1$,  for $kr=n$. Its application is to prove the existence of solutions of the nonlinear Schr\"odinger equations, see details in \cite{BreGa}. We also note that an alternative proof of \eqref{1} was given by H. Engler \cite{Engler} for any bounded set in $\mathbb{R}^n$ with the cone condition. Similar embedding for vector functions $u$ with 
$div u=0$ was investigated by Beale-Kato-Majda:
\begin{equation}\label{1d}
\|\nabla u\|_{L^\infty} \leq C\left(1+\|\text{rot} u\|_{L^\infty}  \left(1+
\log(1+\|u\|_{W^{s+1,p}}) \right)+ \|\text{rot} u\|_{L^2} \right),
\end{equation}
for $sp>n$, see \cite{BeKaMa}  (see also  \cite{OgaTan} for an improvement of \eqref{1d}  in a bounded domain). An application of \eqref{1d} is to prove the breakdown of smooth solutions for the $3$-D Euler equations.  After that, estimate \eqref{1d} was enhanced by Kozono and  Taniuchi  \cite{KoTa} in that  $\|\text{rot} u\|_{L^\infty}$ can be relaxed to  $\|\text{rot} u\|_{BMO}$:
\begin{equation}\label{1e}
\|\nabla u\|_{L^\infty} \leq C\left(1+\|\text{rot} u\|_{BMO}  \left(1+
\log(1+\|u\|_{W^{s+1,p}}) \right)\right).
\end{equation}
To obtain \eqref{1e}, Kozono-Taniuchi \cite{KoTa} proved a logarithmic Sobolev inequality in terms of BMO norm and Sobolev norm that for any $1<p<\infty$, and for  $s>n/p$, then there is a constant $C=C(n, p, s)$ such that the estimate
\begin{equation}\label{2}
\|f\|_{L^\infty}\leq C\left(   1+  \|f\|_{BMO}  \left(1+\log^+ \left(\|f\|_{W^{s,p}}\right)\right) \right)
\end{equation}
holds for all $f\in W^{s,p}$. 
 Obviously, \eqref{2} is a generalization of   \eqref{1}. 
 \\
 
 Besides, it is interesting to note that  Gagliardo-Nirenberg type inequality with critical Sobolev space directly yields  BGW type inequality. For example,  H. Kozono, and H. Wadade \cite{KoWa}  proved the Gagliardo-Nirenberg type inequalities for the critical case and the limiting case of Sobolev space  as follows:
 \begin{equation}\label{1a}
 \|u\|_{L^q} \leq  C_n  r' q^{\frac{1}{r'}} \|u\|^\frac{p}{q}_{L^p}\|(-\Delta)^\frac{n}{2r} u\|^{1-\frac{p}{q}}_{L^r},
 \end{equation}  
 holds for all $u\in L^p\cap  \dot{H}^{\frac{n}{r},r} $
 with  $1\leq p < \infty$, $1<r<\infty$, and for all $q$ with $p\leq q<\infty$ (see also Ozawa \cite{Ozawa1}). 
 \\
And
 \begin{equation}\label{1b}
  \|u\|_{L^q} \leq  C_n  q \|u\|^\frac{p}{q}_{L^p}\|u\|^{1-\frac{p}{q}}_{BMO},
  \end{equation}
  holds for all $u\in L^p\cap  BMO$
   with  $1\leq p < \infty$, and for all $q$ with $p\leq q<\infty$. 
   \\
 As a result, \eqref{1a} implies
  \begin{equation}\label{1c}
 \|u\|_{L^\infty}\leq  C\left( 1+
 (\|u\|_{L^p} + \|(-\Delta)^\frac{n}{2r} u\|_{L^r})\left(\log(1+ \|(-\Delta)^\frac{s}{2} u\|_{L^q})\right)^\frac{1}{r'}
 \right), 
  \end{equation}
 for every  $1\leq p<\infty$, $1<r <\infty$, $1<q<\infty$  and $n/q<s <\infty$.
 \\
  While \eqref{1b} yields
 \begin{equation}\label{1f}
 \|u\|_{L^\infty}\leq  C\left( 1+
  (\|u\|_{L^p} + \|u\|_{BMO}  )\log(1+ \|(-\Delta)^\frac{s}{2} u\|_{L^q})
  \right),
 \end{equation} 
    for every  $1\leq p<\infty$, $1<q <\infty$,   and $n/q<s <\infty$.
    \\
Thus,  \eqref{1c} and \eqref{1f} may be regarded  as a generalization of BGW inequality.   Note that  
   in \eqref{1c} and \eqref{1f}, the logarithm term only contains  the semi-norm $\|u\|_{\dot{W}^{s,p}}$. 
   \\
 Furthermore, Kozono, Ogawa, Taniuchi \cite{KogaTa}  proved the logarithmic Sobolev inequalities in Besov space, generalizing the BGW inequality and the Beale-Kato-Majda inequality. 
\\

Motivated by these above results, in this paper, we  study BGW type inequality  by means of the BMO norm,  the fractional Sobolev norm and    the $\mathcal{\dot{C}}^\eta$ norm,  for $\eta\in(0,1)$. 
    Then,
our first result is as follows:
\begin{theorem} \label{thm2} 
 Let $\eta\in(0,1)$, and $\alpha\in(0,n)$. Then, there exists a constant  $C=C(n,\eta)>0$ such that the estimate 
\begin{align}\label{es1}
\|f\|_{L^\infty}\leq C+C\|f\|_{BMO}\left(1+\log^+\left[\sup_{z\in \mathbb{R}^n}\int_{\mathbb{R}^n}\frac{|f(y)|}{(|z-y|+1)^\alpha}dy +\|f\|_{\mathcal{\dot{C}}^\eta}\right]\right)
\end{align}
 holds for all $f\in  \mathcal{\dot{C}}^{\eta}\cap BMO$. We accept the notation $\log^+ s=\log s$ if $s\geq1$, and  $\log^+ s=0$ if $s\in(0,1)$.

 \end{theorem}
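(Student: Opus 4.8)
The plan is to bound $\|f\|_{L^\infty}$ at an arbitrary Lebesgue point $x_0$ by splitting $f$ through its averages over dyadic balls centered at $x_0$, exactly as in the classical proof of the John--Nirenberg-type $L^\infty$ bounds. Write $f_{B_r} = \fint_{B_r(x_0)} f$. The telescoping identity $f(x_0) = f_{B_{r_0}} + \sum_{j\ge 0}\bigl(f_{B_{r_{j+1}}} - f_{B_{r_j}}\bigr)$ along a geometric sequence $r_j = 2^{-j} r_0$ gives, via the standard estimate $|f_{B_{r/2}} - f_{B_r}| \le C\|f\|_{BMO}$, that $|f(x_0)| \le |f_{B_{r_0}}| + C N \|f\|_{BMO}$ once we truncate the sum at $j = N$ and control the tail. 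The two free parameters are the starting radius $r_0$ and the number of scales $N$; the $\dot{\mathcal C}^\eta$ seminorm is what lets us estimate the tail of the telescoping sum, because on small balls $|f_{B_{r_{j+1}}} - f_{B_{r_j}}| \le C\|f\|_{\dot{\mathcal C}^\eta} r_j^\eta$, which is summable and after $N$ steps contributes $C\|f\|_{\dot{\mathcal C}^\eta} r_0^\eta 2^{-N\eta}$.

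The key computation is the estimate on the large-ball average $|f_{B_{r_0}}|$: here I would \emph{not} use BMO but instead the quantity $Q := \sup_{z}\int_{\mathbb R^n} \frac{|f(y)|}{(|z-y|+1)^\alpha}\,dy$ appearing in \eqref{es1}. For $r_0 \ge 1$ one has $(|x_0 - y| + 1)^\alpha \le (r_0+1)^\alpha \le (2r_0)^\alpha$ for $y \in B_{r_0}(x_0)$, hence
\begin{align*}
|f_{B_{r_0}}| \le \frac{1}{|B_{r_0}|}\int_{B_{r_0}(x_0)} |f(y)|\,dy \le \frac{(2r_0)^\alpha}{|B_{r_0}|}\int_{\mathbb R^n}\frac{|f(y)|}{(|x_0-y|+1)^\alpha}\,dy \le C\, r_0^{\alpha - n}\, Q.
\end{align*}
Since $\alpha < n$, choosing $r_0$ large makes this term small; more precisely I will pick $r_0$ comparable to a fixed power of $(Q + \|f\|_{\dot{\mathcal C}^\eta} + 2)$ so that $r_0^{\alpha-n} Q \le 1$. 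Combined with the tail bound above (whose smallness I arrange by choosing $N$), this reduces everything to counting $N$: the number of dyadic scales between the small radius (of order $1$, say, where $\dot{\mathcal C}^\eta$ control kicks in after accounting for $r_0^\eta 2^{-N\eta}\le 1$) and $r_0$ is $N \sim \log r_0 \sim \log^+(Q + \|f\|_{\dot{\mathcal C}^\eta})$, which is precisely the logarithmic factor in \eqref{es1}.

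Concretely the steps are: (1) fix a Lebesgue point $x_0$ and the geometric radii $r_j = 2^{-j}r_0$; (2) establish the two one-step bounds $|f_{B_{r/2}}-f_{B_r}|\le C\|f\|_{BMO}$ (all scales) and $|f_{B_{r/2}}-f_{B_r}|\le C\|f\|_{\dot{\mathcal C}^\eta} r^\eta$ (all scales), the first by the doubling inclusion $B_{r/2}\subset B_r$, the second by averaging the Hölder bound; (3) write the telescoping sum down to scale $r_N$, estimate the first $N$ increments by $CN\|f\|_{BMO}$ and the tail $\sum_{j\ge N}$ by $C\|f\|_{\dot{\mathcal C}^\eta} r_0^\eta 2^{-N\eta}$; (4) estimate $|f_{B_{r_0}}|$ by $C r_0^{\alpha-n}Q$ as above; (5) optimize by setting $r_0 = (Q + \|f\|_{\dot{\mathcal C}^\eta} + 2)^{1/(n-\alpha)}$ and $N = \lceil \tfrac{1}{\eta}\log_2(r_0^\eta + 1)\rceil$ (or a similar explicit choice) so that the non-BMO terms are $O(1)$ and $N \le C(1 + \log^+(Q + \|f\|_{\dot{\mathcal C}^\eta}))$; (6) conclude $|f(x_0)| \le C + C\|f\|_{BMO}(1 + \log^+(Q + \|f\|_{\dot{\mathcal C}^\eta}))$ and take the supremum over $x_0$.

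The main obstacle is the bookkeeping of the parameter choices so that the powers of $r_0$ match up on both ends: the $\dot{\mathcal C}^\eta$ tail needs $r_0^\eta$ absorbed, the $Q$-term needs $r_0^{\alpha-n}$ absorbed (which is where $\alpha<n$ is essential), and the count $N$ must come out as a single logarithm of $Q + \|f\|_{\dot{\mathcal C}^\eta}$ rather than something worse; getting a \emph{clean} statement with the constant depending only on $n$ and $\eta$ (and not on $\alpha$, as claimed) requires checking that the $\alpha$-dependence only enters through harmless factors like $2^\alpha \le 2^n$ that can be absorbed into $C(n)$. A secondary point is justifying that $f \in \dot{\mathcal C}^\eta \cap BMO$ guarantees enough regularity for the Lebesgue-point argument and for $Q$ to be finite (or, if $Q = \infty$, the inequality is vacuous), so one may harmlessly assume $Q<\infty$.
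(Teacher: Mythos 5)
Your proposal is correct and follows essentially the same dyadic--telescoping strategy as the paper: H\"older control at the small scales, BMO across the intermediate scales, and the weighted $L^1$ quantity $Q$ at the largest scale, balanced so that the BMO piece produces the logarithmic factor. The paper centers at the origin and uses a single parameter $m_0$ with radii $2^{-m_0},\dots,2^{m_0}$, bounding $\bigl|f(0)-\fint_{B_{2^{-m_0}}}f\bigr|$ in one H\"older step rather than summing a geometric tail; your two-parameter version $(r_0,N)$ is equivalent after optimization (effectively $r_0=2^{m_0}$, $N=2m_0$). One bookkeeping slip worth flagging: with your choice $N=\lceil\tfrac{1}{\eta}\log_2(r_0^\eta+1)\rceil$ the tail is $r_0^\eta 2^{-N\eta}\|f\|_{\mathcal{\dot{C}}^\eta}\approx\|f\|_{\mathcal{\dot{C}}^\eta}$, not $O(1)$; you need $N$ of order $\tfrac{1}{\eta}\log_2\bigl((r_0^\eta+1)(\|f\|_{\mathcal{\dot{C}}^\eta}+1)\bigr)$, which is still $\lesssim 1+\log^+\bigl(Q+\|f\|_{\mathcal{\dot{C}}^\eta}\bigr)$, so the conclusion is unaffected.
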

 \begin{remark} It is clear that $\left(\displaystyle\sup_{z\in \mathbb{R}^n}\int_{\mathbb{R}^n}\frac{|f(y)|}{(|z-y|+1)^\alpha}dy\right)$ is finite if $f\in L^1$. On the other hand,    if $f\in L^r$, $r> 1$, then
 	for any $\alpha\in (\frac{n}{r},n)$, we have  
 	\[
 	\displaystyle\sup_{z\in \mathbb{R}^n}\int_{\mathbb{R}^n}\frac{|f(y)|}{(|z-y|+1)^\alpha}dy\leq C \|f\|_{L^r},\]
 	where the constant $C$ is independent of $f$. 
 \end{remark}
 \begin{remark} If $\supp ~f \subset B_R$, then \eqref{es1} implies
 	\begin{align}\label{es1'}
 	\|f\|_{L^\infty}\leq C+C\|f\|_{BMO}\left(1+\log^+\left[R^{n-\alpha+\eta}+\|f\|_{\mathcal{\dot{C}}^\eta}\right]\right).
 	\end{align}
 \end{remark}
 \begin{remark}\label{re4}
 	Note that if $f\in W^{s,p}$ with $sp>n$, then  \eqref{es1} is stronger than \eqref{2}  since   $W^{s,p}\subset\mathcal{C}^{0,\eta}\subset\mathcal{\dot{C}}^{\eta}$, with $\eta=\frac{sp-n}{p}$.
 \end{remark} 
 Concerning  the  BGW type inequality involving the fractional Sobolev space, we have the following result:
 \begin{theorem}
 \label{thm4} Let $s>0,p\geq 1$ be such that $sp=n$. Let $\alpha>0$, $\eta\in (0,1)$. Then, there exists a constant  $C=C(n,s,p,\eta, \alpha)>0$ such that the estimate
  	\begin{align}\label{31}
  \|f\|_{L^\infty}\leq C+C  \|f\|_{\dot W^{s,p}} \left(1+\left(\log^+\left(\sup_{z\in \mathbb{R}^n}\int_{\mathbb{R}^n}\frac{|f(y)|}{(|z-y|+1)^\alpha}dy +\|f\|_{\mathcal{\dot{C}}^{\eta}}\right)\right)^{\frac{p-1}{p}}\right)
  \end{align}
   holds for all $f\in  \mathcal{\dot{C}}^{\eta}\cap \dot W^{s,p}$, where  $\dot W^{s,p}$ is the homogeneous fractional Sobolev space, see its definition below.
    \begin{remark} As Remark \ref{re4}, we can see that \eqref{31} is stronger then  \eqref{1}. Furthermore, if $\supp ~f \subset B_R$, then \eqref{es1} implies
   	\begin{align}\label{es1''}
   	\|f\|_{L^\infty}\leq C+C\|f\|_{\dot W^{s,p}} \left(1+\left(\log^+\left[R^{n-\alpha+\eta}+\|f\|_{\mathcal{\dot{C}}^\eta}\right]\right)^{\frac{p-1}{p}}\right).
   	\end{align}
   \end{remark}
  \begin{remark} We consider $f_\delta(x)=-\log(|x|+\delta)\psi(|x|)$, where $\psi\in\mathcal{C}^1_c([0,\infty))$,  $0\leq \psi\leq 1$,  $\psi(|x|)=1$ if $|x|\leq \frac{1}{4}$, and  $\delta>0$ is small enough. It is not hard to see that for any $\delta>0$ small enough
  	\begin{align*}
  	||f_\delta||_{L^\infty(\mathbb{R}^d)}\sim|\log(\delta)|, ~~	||f_\delta||_{BMO(\mathbb{R}^d)}\sim 1,~~\|f_\delta\|_{\dot W^{\frac{n}{p},p}}\sim |\log(\delta)|^{\frac{1}{p}},
  	\end{align*}
  	and \begin{align*}
  	\sup_{z\in \mathbb{R}^n}\int_{\mathbb{R}^n}\frac{|f_\delta(y)|}{(|z-y|+1)^\alpha}dy \sim 1,~~\|f_\delta\|_{\mathcal{\dot{C}}^{\eta}(\mathbb{R}^n)}\lesssim \delta^{-1}.
  	\end{align*}
  	Therefore, the power $1$ and  $\frac{p-1}{p}$ of the term  $\displaystyle\log_2^+\left(\sup_{z\in \mathbb{R}^n}\int_{\mathbb{R}^n}\frac{|f(y)|}{(|z-y|+1)^\alpha}dy +\|f\|_{\mathcal{\dot{C}}^{\eta}}\right)$      in \eqref{es1} and \eqref{31} respectively are sharp that there are no such estimates of the form: 
  	\begin{align*}
  	\|f_1\|_\infty\leq C+C\|f_1\|_{BMO}\left(1+\left(\log_2^+\left(\sup_{z\in \mathbb{R}^n}\int_{\mathbb{R}^n}\frac{|f_1(y)|}{(|z-y|+1)^\alpha}dy +\|f_1\|_{\mathcal{\dot{C}}^\eta}\right)\right)^\gamma\right),
  	\end{align*}
  	and 
  	\begin{align*}
  	\|f_2\|_{L^\infty}\leq C+C  \|f_2\|_{\dot W^{\frac{n}{p},p}} \left(1+\left(\log^+\left(\sup_{z\in \mathbb{R}^n}\int_{\mathbb{R}^n}\frac{|f_2(y)|}{(|z-y|+1)^\alpha}dy +\|f_2\|_{\mathcal{\dot{C}}^{\eta}}\right)\right)^{\gamma\frac{p-1}{p}}\right),
  	\end{align*}
  	hold for all $f_1\in BMO\cap \mathcal{{\dot{C}}^\eta}$, $f_2\in  \mathcal{\dot{C}}^{\eta}\cap \dot W^{s,p}$, for some $\gamma\in(0,1)$.
  \end{remark}

 \end{theorem}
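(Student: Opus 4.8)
The plan is to reduce the $\dot W^{s,p}$ estimate \eqref{31} to the BMO estimate \eqref{es1} of Theorem \ref{thm2} by a dyadic Littlewood–Paley / para-product splitting, exploiting the fact that when $sp=n$ one has the continuous embedding $\dot W^{s,p}\hookrightarrow \mathrm{BMO}$ but only with a logarithmic loss when one wants pointwise control. Concretely, fix a Littlewood–Paley decomposition $f=\sum_{j\in\mathbb Z}\Delta_j f$ and, for a cutoff $N\in\mathbb N$ to be chosen, write $f = f_{\le N} + f_{>N}$ where $f_{\le N}=\sum_{j\le N}\Delta_j f$. The high-frequency part $f_{>N}$ is estimated in $L^\infty$ directly by summing $\|\Delta_j f\|_{L^\infty}$: using Bernstein's inequality $\|\Delta_j f\|_{L^\infty}\lesssim 2^{jn/p}\|\Delta_j f\|_{L^p}$ together with $s=n/p$, each term is $\lesssim 2^{js}\|\Delta_j f\|_{L^p}\cdot 2^{-js}\cdot 2^{js}$... more precisely one pairs $2^{js}\|\Delta_j f\|_{L^p}\in \ell^p$ (that is the $\dot W^{s,p}$ norm, essentially the $\dot B^s_{p,p}$ norm, which is comparable to $\dot W^{s,p}$ here) against the $\ell^{p'}$ tail $\big(\sum_{j>N}1\big)$-type bound — but that diverges, so instead one uses Hölder on the geometric weight: this is exactly where the exponent $\frac{p-1}{p}=\frac1{p'}$ is produced. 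I would instead control the high part through $\mathcal{\dot C}^\eta$: $\|f_{>N}\|_{L^\infty}\lesssim \sum_{j>N}\|\Delta_j f\|_{L^\infty}\lesssim \sum_{j>N}2^{-j\eta}\|f\|_{\mathcal{\dot C}^\eta}\lesssim 2^{-N\eta}\|f\|_{\mathcal{\dot C}^\eta}$.

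For the low-frequency part $f_{\le N}$, the key point is that it lies in $\mathrm{BMO}$ with a quantitative bound: $\|f_{\le N}\|_{\mathrm{BMO}}\lesssim \|f\|_{\mathrm{BMO}}$ uniformly in $N$ (Littlewood–Paley truncations are bounded on BMO), and moreover $\|f_{\le N}\|_{\dot W^{s,p}}\le \|f\|_{\dot W^{s,p}}$ while $\|f_{\le N}\|_{\mathcal{\dot C}^\eta}\lesssim 2^{N(\eta+?)}\cdots$ and the tail integral $\sup_z\int |f_{\le N}(y)|(|z-y|+1)^{-\alpha}\,dy$ is controlled by the corresponding quantity for $f$ plus acceptable errors. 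Actually the cleaner route, which I expect the authors take, is to bypass Theorem \ref{thm2} and redo its proof with the Sobolev norm in place of BMO: one has the pointwise bound $|f(x)| \lesssim \mathrm{(tail~integral)} + \sum_{j=j_0}^{j_1}\|\Delta_j f\|_{L^\infty} + \mathrm{(high~tail)}$, where $j_0$ is chosen from the tail-integral scale and $j_1$ from the $\mathcal{\dot C}^\eta$ scale, so that $j_1-j_0 \lesssim \log^+(\text{tail}+\|f\|_{\mathcal{\dot C}^\eta})$; then Hölder in $j$ over this range of length $L:=j_1-j_0$ gives $\sum_{j_0\le j\le j_1}\|\Delta_j f\|_{L^\infty}\lesssim \sum_{j_0\le j\le j_1}2^{js}\|\Delta_j f\|_{L^p}\lesssim L^{1/p'}\big(\sum_j 2^{jsp}\|\Delta_j f\|_{L^p}^p\big)^{1/p}\lesssim L^{1/p'}\|f\|_{\dot W^{s,p}}$, using Bernstein with $sp=n$. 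This produces exactly the factor $(\log^+(\cdots))^{\frac{p-1}{p}}$ in \eqref{31}.

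So the key steps, in order, are: (i) set up the Littlewood–Paley decomposition and recall $\|\Delta_j f\|_{L^\infty}\lesssim 2^{jn/p}\|\Delta_j f\|_{L^p}$ (Bernstein) and $\|\Delta_j f\|_{L^\infty}\lesssim \min(2^{-j\eta},1)\|f\|_{\mathcal{\dot C}^\eta}$ for $j\ge 0$; (ii) derive a pointwise bound for $|f(x)|$ splitting frequencies into very low ($j<j_0$, absorbed by the averaged tail integral $\sup_z\int|f(y)|(|z-y|+1)^{-\alpha}dy$, much as in the proof of Theorem \ref{thm2}), middle ($j_0\le j\le j_1$), and high ($j>j_1$, absorbed by $2^{-j_1\eta}\|f\|_{\mathcal{\dot C}^\eta}\le 1$); (iii) choose $j_0,j_1$ so that $2^{j_0}\sim 1$-ish scale from the tail and $2^{j_1\eta}\sim \|f\|_{\mathcal{\dot C}^\eta}+\text{tail}$, making $j_1-j_0\lesssim 1+\log^+(\text{tail}+\|f\|_{\mathcal{\dot C}^\eta})$; (iv) estimate the middle block by Hölder in the summation index $j$ with exponents $p,p'$, invoking $sp=n$ to turn Bernstein into $\|\Delta_j f\|_{L^\infty}\lesssim 2^{js}\|\Delta_j f\|_{L^p}$, and bound the $\ell^p$ norm of $(2^{js}\|\Delta_j f\|_{L^p})_j$ by $\|f\|_{\dot W^{s,p}}$ (using the standard identification of $\dot W^{s,p}$, $sp=n$, with $\dot F^{s}_{p,2}$ and the embedding $\ell^2\subset\ell^\infty$, or directly $\dot B^s_{p,p}$ comparison — this point must be stated carefully since $\dot W^{s,p}$ is not literally Besov unless $p=2$); (v) combine to get \eqref{31}, and check \eqref{es1''} by inserting the support bound $\sup_z\int_{B_R}|f|(|z-y|+1)^{-\alpha}dy\lesssim \|f\|_{L^\infty}R^{n-\alpha}$ combined with $\|f\|_{L^\infty}\lesssim R^\eta\|f\|_{\mathcal{\dot C}^\eta}$ on a ball (after subtracting a mean), giving the $R^{n-\alpha+\eta}$ inside the log. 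The main obstacle I anticipate is step (iv)–(v): reconciling the homogeneous fractional Sobolev norm $\|f\|_{\dot W^{s,p}}$ (a Gagliardo-type seminorm, or $\|(-\Delta)^{s/2}f\|_{L^p}$) with a clean Littlewood–Paley square-function bound $\big(\sum_j 2^{jsp}\|\Delta_j f\|_{L^p}^p\big)^{1/p}\lesssim \|f\|_{\dot W^{s,p}}$ when $p\neq 2$ — for $p>2$ this needs the $\dot F^s_{p,2}\hookrightarrow \dot B^s_{p,p}$... wait, that inclusion goes the wrong way; one actually has $\dot B^s_{p,\min(p,2)}\hookrightarrow \dot F^s_{p,2}\hookrightarrow \dot B^s_{p,\max(p,2)}$, so for $p>2$ one only gets $(\sum 2^{jsp}\|\Delta_j f\|_{L^p}^p)^{1/p}\gtrsim$ (not $\lesssim$) the Triebel–Lizorkin norm, which is the correct direction for an \emph{upper} bound on $\sum\|\Delta_j f\|_{L^\infty}$ only after one more Hölder — so the argument goes through but the book-keeping of which Besov/Triebel–Lizorkin exponent dominates is the delicate part, and handling $p=1$ (where $\dot W^{s,p}$ must be interpreted appropriately) separately may be needed.
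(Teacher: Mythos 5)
Your proposal takes a genuinely different route from the paper. You work with a Littlewood--Paley decomposition $f=\sum_j\Delta_j f$ and split the frequency sum at thresholds $j_0,j_1$; the paper instead works entirely in physical space, expressing $|f(0)|$ through averages $\fint_{\Omega_l}f$ over dyadic annuli $\Omega_l=B_{2^{l+1}}\setminus B_{2^l}$ and replacing the simple telescoping used for the BMO theorem with a higher-order cancellation scheme. The engine is the pair Lemma~\ref{lem2} (an algebraic identity built on coefficients $a_j$ solving a Vandermonde-type system $\sum_j a_j 2^{jl}=0$, $l=0,\dots,k$) and Lemma~\ref{lem3} (a compactness-based Poincar\'e inequality on annuli, proved by contradiction), which together let the middle dyadic range be bounded by $\|D^k f\|_{\dot W^{s-k,p}}$ (or $\|D^k f\|_{L^p}$ in the integer case); H\"older in the index $l$ over $\sim m_0$ terms then produces the same exponent $\frac{p-1}{p}$ that you get from H\"older in $j$. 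Your skeleton (near-origin/far-away/middle split, H\"older in the dyadic index, balance of $m_0$ against the logarithm) is the right one. However, two observations: first, your worry about $\dot F^s_{p,2}$ is moot for non-integer $s$, since the paper's $\|\cdot\|_{\dot W^{s,p}}$ is the Gagliardo seminorm, which is exactly the Besov seminorm $\dot B^s_{p,p}$ (applied to $D^{[s]}f$ with $s-[s]\in(0,1)$ when $s>1$), so $\big(\sum_j 2^{jsp}\|\Delta_j f\|^p_{L^p}\big)^{1/p}\sim \|f\|_{\dot W^{s,p}}$ directly with no Triebel--Lizorkin detour. Second, the delicacy you half-anticipated is real in the \emph{integer} case $s=k$: there the paper's $\dot W^{k,p}$ is the classical Sobolev seminorm ($\sim \dot F^k_{p,2}$ for $1<p<\infty$), and for $1\le p<2$ one has $\dot F^k_{p,2}\hookrightarrow\dot B^k_{p,2}$ but \emph{not} $\hookrightarrow\dot B^k_{p,p}$, so the naive H\"older in $j$ with exponents $(p,p')$ would produce a worse power than $\frac{p-1}{p}$ (and $p=1$ is worse still, since $\dot F^k_{1,2}$ does not characterize $\dot W^{k,1}$). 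The paper's real-variable argument in \eqref{eq16a}--\eqref{eq18} circumvents this by applying H\"older to $\fint_{E_l}|D^k f|$ in $y$ and then in $l$, which works uniformly for all $p\ge 1$. In short: the paper's approach is Fourier-free and handles integer and non-integer $s$ and all $p\ge 1$ in one framework, at the cost of the two technical lemmas; your approach is cleaner for readers at home with Littlewood--Paley and works as stated for non-integer $s$, but for integer $s$ with $1\le p<2$ it does not recover the sharp exponent without a further idea.
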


Before closing this section, let us introduce some functional spaces that we use through this paper.
First of all, we recall $\mathcal{\dot{C}}^{\eta}$, $\eta\in(0,1)$, as the homogeneous Holder continuous of order $\eta$, endowed with the semi-norm:
\[
\|f\|_{\dot{\mathcal{C}}^\eta} = \sup_{x\not= y}\frac{|f(x)-f(y)|}{|x-y|^\eta}.
\]
Next, if $s\in(0,1)$, then  we recall $\dot{W}^{s,p}$  the homogeneous fractional Sobolev  space, endowed with the semi-norm:
\[
\|f\|_{\dot{W}^{s,p}} = \left( \int_{\mathbb{R}^n}\int_{\mathbb{R}^n}\frac{|f(x)-f(y)|^p}{|x-y|^{n+s p}} dxdy\right)^\frac{1}{p}.
\] 
When $s>1$, and $s$ is not an integer, we denote   $\dot{W}^{s,p}$ as the homogeneous fractional Sobolev space   endowed with the semi-norm:
\[
\|f\|_{\dot{W}^{s,p}}= 
\sum_{ |\sigma|=[s] }   \|D^\sigma f  \|_{\dot{W}^{s-[s],p}} .
\]
If  $s$ is an integer, then  \[
\|f\|_{\dot{W}^{s,p}}= 
\sum_{ |\sigma|=[s] }   \|D^\sigma f  \|_{L^p}  .
\]
We refer to \cite{ElePaVal} for details on the fractional Sobolev space.
\\
After that, we accept the notation  $(f)_\Omega:=\displaystyle\fint_\Omega f=\frac{1}{|\Omega|} \int_\Omega f(x)dx$ for any Borel set $\Omega$. Finally,  $C$ is always denoted as a constant which can change from line to line. And $C(k,n,l)$ means that this constant merely depends on $k,n,l$.   

\section{Proof of the Theorems} 
 We first prove Theorem \ref{thm2}.
 \vspace*{0.1in}
 \\
\begin{proof}[Proof of Theorem \ref{thm2} ] It is enough to prove that 
		\begin{align}\label{es1re}
	|f(0)|\leq C+C\|f\|_{BMO}\left(1+\log_2^+\left(\int_{\mathbb{R}^n}\frac{|f(y)|}{(|y|+1)^\alpha}dy +\|f\|_{\mathcal{\dot{C}}^\eta}\right)\right),
	\end{align}
	 Let $m_0\in \mathbb{N}$, set $B_\rho:=B_\rho(0)$,  we have, 
	 \begin{align*}
	 |f(0)|&=\left|f(0)-\fint_{B_{2^{-m_0}}}f+\sum_{j=-m_0}^{m_0-1}\left(\fint_{B_{2^{j}}}f-\fint_{B_{2^{j+1}}}f\right)+
	 \fint_{B_{2^{m_0}}}f\right|\\& \leq \fint_{B_{2^{-m_0}}}|f-f(0)|+\sum_{j=-m_0}^{m_0-1}\fint_{B_{2^{j}}}|f-(f)_{B_{2^{j+1}}}|+
	 C2^{-m_0(n-\alpha)} \int_{B_{2^{m_0}}}\frac{|f(y)|}{(|y|+1)^\alpha}dy
	 \\& \leq \fint_{B_{2^{-m_0}}}|y|^\eta \|f\|_{\mathcal{\dot{C}}^\eta} dy+2m_0 
	 \|f\|_{BMO}+
	 C2^{-m_0(n-\alpha)} \int_{B_{2^{m_0}}}\frac{|f(y)|}{(|y|+1)^\alpha}dy
	 	\\ & \leq C2^{-m_0\min\{n-\alpha,\eta\}} \left(\int_{\mathbb{R}^n}\frac{|f(y)|}{(|y|+1)^\alpha}dy +\|f\|_{\mathcal{\dot{C}}^\eta}\right)+Cm_0 
	 \|f\|_{BMO}.
	 \end{align*}
	Choosing \[m_0=\left[\frac{\log_2^+\left(\displaystyle \int_{\mathbb{R}^n}\frac{|f(y)|}{(|y|+1)^\alpha}dy +\|f\|_{\mathcal{\dot{C}}^{\eta}}\right)}{\min\{n-\alpha,\eta\}}\right]+1,\] we get \eqref{es1re}. The proof is complete.
\end{proof}\\ 
 
Next, we prove Theorem \ref{thm4}.
\vspace*{0.1in}
\\
\begin{proof}[Proof  of Theorem \ref{thm4}] To prove it, we need the following lemmas:
\begin{lemma}\label{lem2}
  Let $a_0=1$, and  let  $(a_0, a_1,...,a_{k+1})\in\mathbb{R}^{k+2}$, for any $k\geq 1$, be  a unique solution of the following system:   \begin{equation}\label{eq3}
  \displaystyle\sum_{j =0}^{k+1} a_j 2^{jl}=0, \quad \forall l=0,...,k.
  \end{equation}
Then, we have
		\begin{align}\label{eq2}
		a:=\sum_{j=0}^{k}  (k-j+1) a_j \not =0.
		\end{align}
		Moreover,  for any $m\geq 1$, and for 
		$b, b_l \in\mathbb{R}$, $l=-m,...,m$, we have 
		\begin{align}\label{eq1}
		\sum_{l=-m}^{m-1}\left[\sum_{j=0}^{k+1} a_j b_{j+l}\right]= \sum_{l=m}^{k+m} \left[ \sum_{j=l-m+1}^{k+1} a_j \right] b_{l}  + \sum_{l=-m}^{k-m} \left[ \sum_{j=0}^{l+m} a_j \right]( b_{l}-b)+ ab.
		\end{align}
		As a result, we obtain
		\begin{equation}\label{eq5}
		|b|\leq \frac{1}{|a|} \left[ \sum_{j=0}^{k+1} |a_j| \right] \sum_{l=-m}^{k-m} |b_{l}-b|+\frac{1}{|a|} \sum_{l=-m}^{m-1}\left|\sum_{j=0}^{k+1} a_j b_{j+l}\right|+\frac{1}{|a|} \left[ \sum_{j=0}^{k+1} |a_j| \right]\sum_{l=m}^{k+m} | b_{l}|.
		\end{equation}
\end{lemma}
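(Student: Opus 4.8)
\textbf{Proof proposal for Lemma \ref{lem2}.}

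The plan is to treat the three assertions in turn, since \eqref{eq5} follows from \eqref{eq1} and \eqref{eq2} by the triangle inequality, and \eqref{eq1} is a purely formal summation rearrangement once \eqref{eq2} is in place. I would begin with the nondegeneracy claim \eqref{eq2}, which is the heart of the matter. The vector $(a_0,\dots,a_{k+1})$ is determined by the $k+1$ linear conditions $\sum_{j=0}^{k+1} a_j 2^{jl}=0$ for $l=0,\dots,k$ together with the normalization $a_0=1$; since the coefficient matrix is a Vandermonde-type matrix in the distinct nodes $2^0,2^1,\dots,2^{k+1}$, the solution exists and is unique. Concretely, $\sum_j a_j x^j$ must be (a scalar multiple of) $\prod_{j=1}^{k+1}(x-2^j)$ evaluated appropriately; the conditions say that the polynomial $P(x)=\sum_{j=0}^{k+1}a_j x^j$ has the property that $\sum_j a_j j^l$-type moments vanish, or more cleanly, that $\sum_{j=0}^{k+1} a_j q(j)=0$ for every polynomial $q$ of degree $\le k$ — wait, that is not quite the statement; the conditions are $\sum_j a_j (2^l)^j=0$, i.e. the Laurent/exponential sums vanish. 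I would instead use the generating-function identity: the quantity $a=\sum_{j=0}^{k}(k-j+1)a_j$ can be written as a contour/coefficient extraction, namely $a = \sum_{j=0}^{k+1} a_j c_j$ where $c_j = k-j+1$ for $j\le k$ and $c_{k+1}=0$. The main obstacle is to show this particular linear functional does not annihilate the specific solution vector; I expect the cleanest route is to identify $\sum_{j=0}^{k+1}a_j z^j$ with $C\prod_{i=1}^{k+1}(z-2^i)$ for a nonzero constant $C$ (forced by the vanishing at $z=2,4,\dots,2^{k+1}$ and degree $k+1$), normalize via $a_0=1$ to get $C = \big(\prod_{i=1}^{k+1}(-2^i)\big)^{-1}$, and then compute $a$ as a weighted coefficient sum of this explicit polynomial, showing it is a nonzero rational number (e.g. by evaluating a derivative or an Abel-type summation $\sum_j (k-j+1)a_j = \sum_{m=0}^{k}\sum_{j=0}^{m} a_j$, so $a = \sum_{m=0}^k S_m$ with $S_m = \sum_{j=0}^m a_j$, and $S_m$ has a sign-definite or explicitly computable form).

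For the summation identity \eqref{eq1}, I would proceed by direct reindexing of the double sum on the left. Writing $\sum_{l=-m}^{m-1}\sum_{j=0}^{k+1} a_j b_{j+l}$ and substituting $l' = j+l$, the index $l'$ ranges over $-m$ to $k+m-1$ (as $j$ runs $0$ to $k+1$ and $l$ runs $-m$ to $m-1$), and for each fixed $l'$ the coefficient of $b_{l'}$ is $\sum_{j} a_j$ over those $j$ with $-m \le l'-j \le m-1$, i.e. $l'-m+1 \le j \le l'+m$, intersected with $0\le j\le k+1$. Splitting the range of $l'$ into the three regimes — $l' \le -1$ roughly (where the lower constraint $j\ge 0$ binds, giving $\sum_{j=0}^{l'+m}a_j$), the middle regime $-1 < l' < $ something (where all of $0,\dots,k+1$ contribute, and $\sum_{j=0}^{k+1}a_j = 0$ by the $l=0$ case of \eqref{eq3}, so these terms vanish — this is why $b$ can be subtracted freely there), and $l' \ge m$ (where the upper constraint $j \le k+1$ binds, giving $\sum_{j=l'-m+1}^{k+1}a_j$) — and then using $\sum_{j=0}^{k+1}a_j=0$ to replace $b_{l'}$ by $b_{l'}-b$ wherever the full coefficient block appears, and collecting the leftover $b$-terms into the constant $ab$ via the definition of $a$, yields exactly \eqref{eq1}. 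The bookkeeping of the three index ranges and the placement of the $-m$, $m-1$, $k-m$, $k+m$ endpoints is the fiddly part, but it is mechanical.

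Finally, \eqref{eq5} is immediate: from \eqref{eq1}, solve for $ab$, move everything else to the other side, divide by $|a|\ne 0$ (justified by \eqref{eq2}), and bound each of the three sums by the triangle inequality, estimating $\big|\sum_{j=l-m+1}^{k+1}a_j\big| \le \sum_{j=0}^{k+1}|a_j|$ and $\big|\sum_{j=0}^{l+m}a_j\big| \le \sum_{j=0}^{k+1}|a_j|$ uniformly in $l$. I expect the only genuinely nontrivial step is \eqref{eq2}; the rest is reorganization. One caveat I would double-check while writing: the exact upper endpoint of the "middle" vanishing regime and whether the stated sum $\sum_{l=-m}^{k-m}(b_l - b)$ versus $\sum_{l=m}^{k+m} b_l$ correctly partition the $l'$-range with no overlap or gap — this forces the relation between the number of equations $k+1$, the number of unknowns $k+2$, and the shift structure, and it is the place where an off-by-one error would hide.
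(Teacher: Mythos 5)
The proposal follows the same broad plan as the paper (polynomial factorization for \eqref{eq2}, reindexing for \eqref{eq1}, triangle inequality for \eqref{eq5}), but the treatment of \eqref{eq2}, which you correctly identify as the heart of the matter, contains a concrete error and is not actually completed.

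First, the roots are mislabeled. The conditions \eqref{eq3} with $l=0,\dots,k$ say that $Q(x):=\sum_{j=0}^{k+1}a_j x^j$ vanishes at $x=2^0,2^1,\dots,2^k$, not at $2^1,\dots,2^{k+1}$ as you wrote. This is not just an off-by-one cosmetic slip: it hides the crucial fact that $x=1$ is a (simple) root of $Q$. Once you see $Q(1)=0$, i.e. $\sum_{j=0}^{k+1}a_j=0$, the nondegeneracy claim falls out of a one-line algebraic identity that you never reach:
\begin{align*}
0 \;=\; (k+1)\sum_{j=0}^{k+1}a_j \;=\; \sum_{j=0}^{k+1}(k-j+1)a_j \;+\; \sum_{j=1}^{k+1} j\,a_j \;=\; a + Q'(1),
\end{align*}
so $a=-Q'(1)$. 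And since $Q(x)=a_{k+1}\prod_{l=0}^{k}(x-2^l)$ with $1=2^0$ a simple root, $Q'(1)=a_{k+1}\prod_{l=1}^{k}(1-2^l)\neq 0$. Your proposal instead gestures at ``evaluating a derivative or an Abel-type summation'' and hopes that $S_m=\sum_{j\le m}a_j$ is ``sign-definite or explicitly computable''; the Abel identity $a=\sum_{m=0}^{k}S_m$ is correct but does not by itself give nonvanishing, and the $Q'(1)$ route (which does work) is only available after you fix the root set to include $1$. Also note that the normalization constant you wrote, $C=\bigl(\prod_{i=1}^{k+1}(-2^i)\bigr)^{-1}$, is inconsistent with $a_0=1$ once the roots are corrected to $1,2,\dots,2^{k}$.

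The reindexing argument for \eqref{eq1} is sound in spirit and equivalent to the paper's verification (the paper expands the right side and matches it to $\sum_{j}a_j\sum_{l=j-m}^{j+m-1}b_l$; you change variables $l'=j+l$ on the left and split into three regimes — same bookkeeping, different direction). One small slip there too: the range of $l'$ runs from $-m$ up to $(k+1)+(m-1)=k+m$, not $k+m-1$. You flag that off-by-one hazards lurk in this partition, which is fair, but one has already crept in. The passage to \eqref{eq5} is correctly described and matches the paper.
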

\begin{proof}
First of all, we note that  $a_{j}\not=0$, for $j=0,...,k+1$. 
Set
\[Q(x)= \sum_{j=0}^{k+1} a_{j}x^{j}.\] Then, 
\[
Q^\prime(1)= \sum_{j=1}^{k+1} ja_j.
\]
On the other hand, by \eqref{eq3}, we have $Q(2^l)=0$, for $l=0,...,k$. Thus, 
\[Q(x)=a_{k+1}\prod_{l=0}^{k}  (x-2^l),~ \text{ and  } Q^\prime(1)=\prod_{l=1}^{k}  (1-2^l) .\]
This implies 
\begin{equation}\label{eq4}
\sum_{j=1}^{k+1} ja_j=\prod_{j=1}^{k}  (1-2^l)\not=0.
\end{equation}
Next, we observe that
\[0=(k+1)\sum_{j=0}^{k+1} a_j =  a+  \sum_{j=1}^{k+1} j a_j=0.\]
The last equation and $\eqref{eq4}$ yield  $a=-\displaystyle\prod_{j=1}^{k}  (1-2^l)\not=0$. 
\\

Now, we prove $\eqref{eq1}$. We denote  $LHS$ (resp. $RHS$) is the left hand side (resp. the right hand side) side  of \eqref{eq1}.
It is not difficult to verify that
\[
\sum_{l=-m}^{k-m} \left[ \sum_{j=0}^{l+m} a_j \right]b= ab.
\] 
Then, a direct computation shows
\begin{align*}
& RHS= a_0 b_{-m}+ (a_0+a_1)b_{1-m} +...+(a_0+...+a_k)b_{k-m} 
\\
& ~~~~~~~~+(a_1+...+a_{k+1})b_m + (a_2+...+a_{k+1})b_{m+1}+...+ a_{k+1}b_{k+m}= a_0 \sum_{l=-m}^{k-m}b_l 
\\
& ~~~~~~~~+a_1 \left( \sum_{l=1-m}^{k-m}b_l +\sum_{l=m}^{m}b_l\right) +...+a_{k}  \left( \sum_{l=k-m}^{k-m} b_{k-m}+\sum_{l=m}^{m+k-1}b_l\right) + a_{k+1}  \left(\sum_{l=m}^{m+k}b_l\right) .
\end{align*}
Note that  $\displaystyle\left(\sum_{j=0}^{k+1} a_j\right) \sum_{l=k+1-m}^{m-1} b_l=0$.  Thus,
\begin{align*}
& RHS= RHS+ \left(\sum_{j=0}^{k+1} a_j\right) \sum_{l=k+1-m}^{m-1} b_l 
\\
&~~~~~~=\sum_{j=0}^{k+1} a_j\left(\sum_{l=j-m}^{j+m-1}b_l \right)
\\
&~~~~~~=\sum_{l=m}^{k+m} \left( \sum_{j=l-m+1}^{k+1} a_j \right) b_{l}  +\sum_{l=k+1-m}^{m-1} \left( \sum_{j=0}^{k+1} a_j \right) b_{l}  + \sum_{l=-m}^{k-m} \left( \sum_{j=0}^{l+m} a_j \right)b_{l}
\\
&~~~~~~=\sum_{l=m}^{k+m} \left( \sum_{j=l-m+1}^{k+1} a_j \right) b_{l}  + \sum_{l=-m}^{k-m} \left( \sum_{j=0}^{l+m} a_j \right)b_{l}
\\
&~~~~~~ =LHS.
\end{align*}
Or, we get $\eqref{eq1}$.
\\
Finally, \eqref{eq5} follows from \eqref{eq1} by using the triangle inequality.
In other words, we get Lemma \ref{lem2}.
\end{proof}
\\

Next, we have
\begin{lemma}\label{lem3} Assume  $a_0, a_1,...,a_{k+1}$ as in Lemma \ref{lem2}. Let $\Omega_j=B_{2^{j+1}} \backslash B_{2^j}$, where $B_\rho:=B_\rho(0)$ for any $\rho>0$.
Then, there holds
\begin{align}\label{es}
\left|\sum_{j=0}^{k+1} a_j\fint_{\Omega_j} f\right|\leq C\fint_{B_{2^{k+3}}\backslash B_{2^{-1}}  }  \left| D^{k} f(y)-\left(D^{k} f\right)_{ B_{2^{k+3}}\backslash B_{2^{-1}} } \right|dy.
\end{align}
For any $l\in\mathbb{R}$, we set $E_l=B_{2^{k+l+3}}\backslash B_{2^{l-1}}$.  As a consequence of \eqref{es}, we obtain 
\begin{align}\label{es"}
\left|\sum_{j=0}^{k+1} a_j\fint_{\Omega_{j+l}} f \right|\leq C  2^{kl}\fint_{E_l} \fint_{E_l}  
\left|D^{k} f(y)-D^{k} f(y')\right|dy dy'.
\end{align}
Moreover,  by the triangle inequality we get from \eqref{es"}
\begin{align}\label{es'}
\left|\sum_{j=0}^{k+1} a_j\fint_{\Omega_{j+l}} f\right|\leq C ~ 2^{kl} \fint_{E_l} \left|D^{k} f(y)\right|dy.
\end{align}
\end{lemma}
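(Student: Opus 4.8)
The plan is to prove \eqref{es} first, and then to obtain \eqref{es"} by rescaling and \eqref{es'} by the triangle inequality, exactly in the order indicated in the statement. I would set $U:=B_{2^{k+3}}\backslash B_{2^{-1}}$ and record at the outset that $\Omega_j=B_{2^{j+1}}\backslash B_{2^j}\subset U$ for every $j\in\{0,\dots,k+1\}$ (since $k\ge 1$), with $|\Omega_j|\ge|\Omega_0|$ and $|U|\le|B_{2^{k+3}}|$, so that $\fint_{\Omega_j}|g|\le C\fint_U|g|$ for every $g\in L^1(U)$ and every such $j$. Here and below $C$ may depend on $n$ and $k$, which is harmless because $k$ will be fixed in terms of $s,p$ in the proof of Theorem \ref{thm4}.

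The decisive point is that the functional $f\mapsto\sum_{j=0}^{k+1}a_j\fint_{\Omega_j}f$ annihilates every polynomial of degree at most $k$. To see this I would use the dilation $y=2^jz$, which sends $\Omega_j$ onto $\Omega_0$ and gives $\fint_{\Omega_j}P(y)\,dy=\fint_{\Omega_0}P(2^jz)\,dz$; writing $P=\sum_{l=0}^kP_l$ with $P_l$ homogeneous of degree $l$ and using $P_l(2^jz)=2^{jl}P_l(z)$ yields $\fint_{\Omega_j}P=\sum_{l=0}^k2^{jl}\fint_{\Omega_0}P_l$, hence $\sum_{j=0}^{k+1}a_j\fint_{\Omega_j}P=\sum_{l=0}^k\big(\sum_{j=0}^{k+1}a_j2^{jl}\big)\fint_{\Omega_0}P_l=0$ by \eqref{eq3}. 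Consequently, for any polynomial $P$ of degree at most $k$,
\[
\Big|\sum_{j=0}^{k+1}a_j\fint_{\Omega_j}f\Big|=\Big|\sum_{j=0}^{k+1}a_j\fint_{\Omega_j}(f-P)\Big|\le\Big(\sum_{j=0}^{k+1}|a_j|\Big)\max_{0\le j\le k+1}\fint_{\Omega_j}|f-P|\le C\fint_U|f-P|.
\]

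It remains to choose $P$ so that $\fint_U|f-P|$ is controlled by the oscillation of $D^kf$ on $U$. I would take $c:=(D^kf)_U$, let $Q$ be the polynomial of degree $k$ whose $k$-th order partial derivatives are the constants $c$ (so $D^k(f-Q)=D^kf-(D^kf)_U$), and then invoke the Bramble--Hilbert / Poincar\'e inequality on $U$ --- which is a bounded John domain (a spherical shell) --- to produce a polynomial $R$ of degree at most $k-1$ with $\fint_U|(f-Q)-R|\le C\fint_U|D^k(f-Q)|=C\fint_U|D^kf-(D^kf)_U|$; the constant depends on the fixed reference shell $U$ only. Setting $P:=Q+R$ and plugging it into the previous display proves \eqref{es}. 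If one prefers to avoid quoting Bramble--Hilbert, the same bound follows by covering $U$ with a fixed finite chain of overlapping balls and applying the elementary Poincar\'e inequality successively to $D^{k-1}f,\dots,D^0f$.

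For \eqref{es"} I would apply \eqref{es} to the dilate $g(z):=f(2^lz)$: the map $y=2^lz$ sends $\Omega_{j+l}$ onto $\Omega_j$ and $E_l=B_{2^{k+l+3}}\backslash B_{2^{l-1}}$ onto $U$, and $D^kg(z)=2^{kl}(D^kf)(2^lz)$, so changing variables back turns \eqref{es} for $g$ into
\[
\Big|\sum_{j=0}^{k+1}a_j\fint_{\Omega_{j+l}}f\Big|\le C\,2^{kl}\fint_{E_l}|D^kf-(D^kf)_{E_l}|\le C\,2^{kl}\fint_{E_l}\fint_{E_l}|D^kf(y)-D^kf(y')|\,dy\,dy',
\]
the last step being Jensen's inequality; the constant stays independent of $l$ precisely because the reference domain $U$, not $E_l$, is fixed. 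Finally \eqref{es'} is immediate from $|D^kf(y)-D^kf(y')|\le|D^kf(y)|+|D^kf(y')|$ after integrating. The only non-elementary ingredient in the whole argument is this Poincar\'e/Bramble--Hilbert step on the shell $U$; accordingly the hypotheses are to be read so that $D^kf\in L^1_{\mathrm{loc}}$ (hence $f\in W^{k,1}_{\mathrm{loc}}$), which holds in the application to Theorem \ref{thm4}, the general case following by density.
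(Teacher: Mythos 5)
Your proof is correct, but it takes a genuinely different route from the paper's. Both proofs hinge on the same algebraic observation — that \eqref{eq3} makes the functional $f\mapsto\sum_{j=0}^{k+1}a_j\fint_{\Omega_j}f$ vanish on all polynomials of degree at most $k$, seen via the dilation $y=2^jz$ — but from there they diverge. You proceed \emph{directly}: subtract the polynomial $P=Q+R$ (with $D^kQ=(D^kf)_U$ and $R$ supplied by Bramble--Hilbert on the shell $U$) and conclude in one line. The paper instead runs a \emph{compactness argument by contradiction}: it takes a sequence $(f_m)$ violating the inequality, normalizes it by subtracting a degree-$k$ polynomial $P_{k,m}$ so that $(D^l\tilde f_m)_U=0$ for $l=0,\dots,k$, extracts an $L^1$-strongly convergent subsequence by Rellich (this implicitly uses the same Poincar\'e-type control you invoke, to get a $W^{k,1}(U)$ bound from $\|D^k\tilde f_m\|_{L^1}\to0$ and the normalization), and derives a contradiction from $D^k\tilde f=0$ and the polynomial-annihilation property. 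Your route is more constructive and avoids subsequence extraction, at the cost of quoting Bramble--Hilbert (or a chained Poincar\'e) as a black box; the paper's route is self-contained modulo Rellich compactness but is non-quantitative in the same way all contradiction-compactness arguments are. Your treatment of \eqref{es"} by rescaling $g(z)=f(2^lz)$ and of \eqref{es'} by the pointwise triangle inequality matches what the paper asserts without proof, and your Jensen step relating $\fint_{E_l}|D^kf-(D^kf)_{E_l}|$ to the double average is correct.
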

\begin{proof}
 Assume a contradiction  that \eqref{es} is not true.  There exists then a sequence $(f_m)_{m\geq 1}\subset W^{k,1}(B_{2^{k+3}}\backslash B_{2^{-1}} ) $ such that 
\begin{align}\label{20}
\int_{B_{2^{k+3}}\backslash  B_{2^{-1}}}|D^{k} f_m(y)-\left(D^{k} f_m\right)_{B_{2^{k+3}}\backslash  B_{2^{-1}}}|dy\leq \frac{1}{m},
\end{align}
and 
\begin{align*}
|\sum_{j=0}^{k+1} a_j\fint_{\Omega_j} f_m|=1, ~~\forall m\geq 1.
\end{align*}
Let us put 
\begin{align*}
\tilde{f}_m(x)=f_m(x)-P_{k,m}(x), ~~\text{with } P_{k,m}(x)=\sum_{l=0}^{k}\sum_{\alpha_1+...+\alpha_n=l}c_{l, k, m}(\alpha_1,...,\alpha_n)x_1^{\alpha_1}x_2^{\alpha_2}... x_n^{\alpha_n},
\end{align*}
and $c_{l, k, m}(\alpha_1,...,\alpha_n)$ is a constant such that 
\begin{equation}\label{21}
\left(D^l\tilde{f}_m\right)_{B_{2^{k+3}}\backslash  B_{2^{-1}}}=0, ~\forall l=0,...,k.
\end{equation}
By a sake of brief, we denote  $c_{l, m}=c_{l, k, m}(\alpha_1,...,\alpha_n)$.
Since $P_{k,m}$ is a polynomial of at most $k$-degree, then   $D^k P_{k,m}=const$. This fact, \eqref{20},  and $\eqref{21}$ imply 
 
\begin{align*}
\int_{B_{2^{k+3}}\backslash  B_{2^{-1}}}|D^{k} \tilde{f}_m (y) |dy= \int_{B_{2^{k+3}}\backslash  B_{2^{-1}}}|D^{k} f_m (y) -\left(D^{k} f_m\right)_{B_{2^{k+3}}\backslash  B_{2^{-1}}}|dy \leq \frac{1}{m}.
\end{align*}
It follows  form the compact embeddings  that there exists a subsequence of $(\tilde{f}_m)_{m\geq 1}$ (still denoted as $(\tilde{f}_m)_{m\geq 1}$) such that that $\tilde{f}_m\to \tilde{f}$ strongly in $L^1(B_{2^{k+3}}\backslash  B_{2^{-1}})$, and 
\begin{align*}
D^{k} \tilde{f}=0, ~~\text{ in}~~B_{2^{k+3}}\backslash  B_{2^{-1}}.
\end{align*}
This implies that $\tilde{f}$ is a polynomial of at most $(k-1)$-degree, i.e: 
\begin{align*}
\tilde{f}(x)=\sum_{l=0}^{k-1}\sum_{\alpha_1+...+\alpha_n=l}c_{l,k}'(\alpha_1,...,\alpha_n)x_1^{\alpha_1}x_2^{\alpha_2}... x_n^{\alpha_n},~~\forall x\in B_{2^{k+3}}\backslash  B_{2^{-1}}.
\end{align*}
On the other hand, we observe  that for any $l=0,...,k$
\begin{align*}
&\sum_{j=0}^{k+1} a_j \fint_{\Omega_j}  \sum_{\alpha_1+...+\alpha_n=l} c(\alpha_1,..., \alpha_n)  x_1^{\alpha_1}x_2^{\alpha_2}...x_n^{\alpha_n} dx_1 dx_2...dx_n\\&= \sum_{j=0}^{k+1} a_j \fint_{\Omega_1} \sum_{\alpha_1+...+\alpha_n=l} c(\alpha_1,..., \alpha_n)(2^jx_1)^{\alpha_1}(2^jx_2)^{\alpha_2}...(2^j x_n)^{\alpha_n} dx_1 dx_2... dx_n\\&= \fint_{\Omega_1} \sum_{\alpha_1+...+\alpha_n=l}c(\alpha_1,..., \alpha_n) \left(\sum_{j=0}^{k+1} a_j  2^{jl}\right)  x_1^{\alpha_1}x_2^{\alpha_2}... x_n^{\alpha_n} ~ dx_1dx_2...dx_n=0,
\end{align*}
  by $\eqref{eq3}$.
This implies
\begin{equation}\label{24}
\sum_{j=0}^{k+1} a_j\fint_{\Omega_j} \tilde{f} =0,
\end{equation}
and
\begin{align*}
\left|\sum_{j=0}^{k+1} a_j\fint_{\Omega_j} \tilde{f}_m\right|= \left|\sum_{j=0}^{k+1} a_j\fint_{\Omega_j} f_m \right|=1.
\end{align*}
Remind that $\tilde{f}_m \rightarrow \tilde{f}$ strongly in $L^1(B_{2^{k+3}}\backslash  B_{2^{-1}})$, then we have 
\begin{align*}
\left|\sum_{j=0}^{k+1} a_j\fint_{\Omega_j} \tilde{f}\right|=1.
\end{align*}
Or, we complete the proof of \eqref{es}.  
\\
The proof of \eqref{es"} (resp. \eqref{es'}) is trivial then we leave it to the reader.
 This puts an end to the proof of Lemma \ref{lem3}.
\end{proof}
\\

Now, we are ready to prove Theorem \ref{thm4}. \\
It is enough to show that 
\begin{align}\label{eq15b}
|f(0)|\leq C+C  \| f\|_{\dot W^{s,p}} \left(1+\log_2^+\left(\int_{\mathbb{R}^n}\frac{|f(y)|}{(|y|+1)^\alpha}dy +\|f\|_{\mathcal{\dot{C}}^{\eta}}\right)\right)^{\frac{p-1}{p}}.
\end{align}
\\
Set $s_1=s- k$,  $s_1\in [0,1)$. Then,  we divide our study into the two cases:
\\ 

{\bf i) Case: $s_1\in (0,1)$:}
\\
We apply Lemma \ref{lem2}  with  $b=f(0)$, $b_j=\displaystyle\fint_{\Omega_j}  f$. Then,  for any $m_0\geq 1$, there is a constant $C=C(k)>0$ such that
\begin{align}\label{eq7}
\begin{split}
|f(0)|\leq C \left(\sum_{l=-m_0}^{k-m_0}\left|\fint_{\Omega_l}f -f(0) \right|+ \sum_{l=-m_0}^{m_0-1}\left|\sum_{j=0}^{k+1} a_j\fint_{\Omega_{j+l}}   f\right|+  \sum_{l=m_0}^{k+m_0}  \left|\fint_{\Omega_l} f \right| \right).
\end{split}
\end{align}
Concerning the first term  on the right hand side of  \eqref{eq7}, we have
\begin{align*}
\begin{split}
\sum_{l=-m_0}^{k-m_0}\left|\fint_{\Omega_l}f -f(0) \right|\leq  \sum_{l=-m}^{k-m}  \fint_{\Omega_l} |f-f(0) | \leq   \sum_{l=-m}^{k-m}  \fint_{\Omega_l}  |x|^\eta \|f\|_{\mathcal{\dot{C}}^{\eta}}  dx.
\end{split}
\end{align*}
Thus,
\begin{equation}\label{eq7a}
\sum_{l=-m}^{k-m}\left|\fint_{\Omega_l}f -f(0) \right|\leq   \sum_{l=-m}^{k-m}  2^{(l+1)\eta}  \|f\|_{\mathcal{\dot{C}}^{\eta}} \leq C(\eta, k) 2^{-m\eta} \|f\|_{\mathcal{\dot{C}}^{\eta}}.
\end{equation}
Next, we use \eqref{es"} in Lemma \ref{lem3} to obtain
\begin{equation}\label{eq9}
\begin{split}
\sum_{l=-m}^{m-1}  \left|\sum_{j=0}^{k+1} a_j\fint_{\Omega_{j+l}}   f\right|   \leq C \sum_{l=-m}^{m-1}  2^{kl} \fint_{E_{l}} \fint_{E_{l}}   \left| D^{k} f(y)-D^k f(z)  \right| dy dz,
\end{split}
 \end{equation}
 where $E_l=B_{2^{k+l+3}}\backslash B_{2^{l-1}}$.
 	It follows from  H\"older's inequality
	\begin{align*}
&	\sum_{l=-m_0}^{m_0-1}2^{kl}\fint_{E_{l}}\fint_{E_{l}}|D^k f(y)- D^k f(z)|dydz \leq
	\\
 &\sum_{l=-m_0}^{m_0-1} 2^{kl}|E_l|^{-2} \left(\int_{E_{l}}\int_{E_{l}}\frac{|D^k f(y)- D^k f(z)|^p}{|y-z|^{n+s_1p}}dydz\right)^{\frac{1}{p}} 
 \left(\int_{E_{l}}\int_{E_{l}}|y-z|^{\frac{n+s_1 p}{p-1}}dydz\right)^{\frac{p-1}{p}}.
	\end{align*}
	Since $y, z\in E_l$, we have  $|y-z|\leq |y|+|z|\leq 2^{k+l+4}$. 
	Thus, the right hand side of the indicated inequality is less than
	\begin{align*}
		C(n,p,k)  2^{kl+\frac{l(n+s_1p)}{p}}
	|E_l|^{\frac{-2}{p}} \sum_{l=-m_0}^{m_0-1}  \left(\int_{E_{l}}\int_{E_{l}}\frac{|D^kf(y)-D^kf(z)|^p}{|y-z|^{n+s_1p}}dydz\right)^{\frac{1}{p}}.
	\end{align*}
	Note that $n=sp=(k+s_1)p$, and $|E_l|^{\frac{-2}{p}}\leq C(n,p,k) 2^{-2l\frac{n}{p}} $. 
	\\
	 Then,  there is a constant $C=C(k,s,n)>0$ such that  
	\begin{equation}\label{eq11}
	\sum_{l=-m_0}^{m_0-1}2^{kl}\fint_{E_{l}}\fint_{E_{l}}|D^k f(y)- D^k f(z)|dydz \leq C \sum_{l=-m_0}^{m_0-1}  \left(\int_{E_{l}}\int_{E_{l}}\frac{|D^kf(y)-D^kf(z)|^p}{|y-z|^{n+s_1p}}dydz\right)^{\frac{1}{p}}.
	\end{equation}
	 Thanks to the inequality 
	\begin{align}\label{es6}
	\displaystyle\sum_{j=-m_0}^{m_0-1}c_j^{\frac{1}{p}}\leq (2m_0)^{\frac{p-1}{p}} \left(\sum_{j=-m_0}^{m_0-1} c_j\right)^{\frac{1}{p}}, 
	\end{align}
we have
\begin{equation}\label{eq12}
\begin{split}
&\sum_{l=-m_0}^{m_0-1}  \left(\int_{E_{l}}\int_{E_{l}}\frac{|D^kf(y)-D^kf(z)|^p}{|y-z|^{n+s_1p}}dydz\right)^{\frac{1}{p}} 
\\ &~~~~~~~\leq (2m_0)^{\frac{p-1}{p}}\left(\sum_{l=-m_0}^{m_0-1}  \int_{E_{l}}\int_{E_{l}}\frac{|D^kf(y)-D^kf(z)|^p}{|y-z|^{n+s_1p}}dydz\right)^{\frac{1}{p}}.
\end{split}
\end{equation}
Moreover, we observe that  $\displaystyle\sum_{l=-\infty}^{+\infty}\mathbb{\chi}_{E_l\times E_l}(y_1,y_2)\leq k+4$, ~for all $(y_1,y_2)\in\mathbb{R}^n\times\mathbb{R}^n$. Thus,
\begin{equation}\label{eq13}
\sum_{l=-m_0}^{m_0-1}  \int_{E_{l}}\int_{E_{l}}\frac{|D^kf(y)-D^kf(z)|^p}{|y-z|^{n+s_1p}}dydz\leq (k+4) \int_{\mathbb{R}^n}\int_{\mathbb{R}^n}\frac{|D^kf(y)-D^kf(z)|^p}{|y-z|^{n+s_1p}}dydz. 
\end{equation}
Combining \eqref{eq11}, \eqref{eq12} and \eqref{eq13} yields
\begin{equation}\label{eq14}
\sum_{l=-m_0}^{m_0-1}2^{kl}\fint_{E_{l}}\fint_{E_{l}}|D^k f(y)- D^k f(z)|dydz\leq C(k,s,n) m_0^{\frac{p-1}{p}}\|f\|_{\dot{W}^{s,p}} .
\end{equation}
It remains to treat the last term. Then, it  is not difficult to see that  for any $\alpha>0$
\begin{align}\label{eq15}
\begin{split}
\sum_{l=m_0}^{k+m_0}  \left|\fint_{\Omega_l} f \right|&\leq C(k,n)2^{-m_0n}\int_{B_{2^{k+m_0}}}|f|
\\
&\leq C(k,n,\alpha) 2^{-m_0(n-\alpha)} \int_{B_{2^{k+m_0}}} \frac{|f(x)|dx}{(|x|+1)^\alpha} .
\end{split}
\end{align}
Inserting  \eqref{eq7a}, \eqref{eq14}, and \eqref{eq15} into \eqref{eq7} yields
\begin{align}\label{*}
|f(0)|&\leq C 2^{-m_0\min\{n-\alpha,\eta\}} \left(\int_{\mathbb{R}^n}\frac{|f(y)|}{(|y|+1)^\alpha}dy +\|f\|_{\mathcal{\dot{C}}^{\eta}}\right)+C m_0^{\frac{p-1}{p}} \|f\|_{\dot{W}^{s,p}}.
\end{align}
By choosing   
\[m_0=\left[\frac{\log_2^+\left(\displaystyle\int_{\mathbb{R}^n}\frac{|f(y)|}{(|y|+1)^\alpha}dy +\|f\|_{\mathcal{\dot{C}}^{\eta}}\right)}{\min\{n-\alpha,\eta\}}\right]+1
,\]
we obtain \eqref{eq15b}.\\
\medskip\\
%
%
	{\bf ii) Case: $s_1=0 ~(s=k)$:} 
	\\
	The proof is similar to the one of
	 the case $s_1\in(0,1)$. There  is just a  difference of estimating the second term on the right hand side of \eqref{eq7} as follows:
	\\
	
	Use \eqref{es'}, we get
\begin{equation}\label{eq16a}
 \sum_{l=-m_0}^{m_0-1}\left|\sum_{j=0}^{k+1} a_j\fint_{\Omega_{j+l}}   f\right|\leq 	C\sum_{l=-m_0}^{m_0-1} 2^{kl} \fint_{E_l} |D^{k} f|.\end{equation}
 Applying H\"older's inequality, we have
 \begin{align}\sum_{l=-m_0}^{m_0-1} 2^{kl} \fint_{E_l} |D^{k} f|&\leq  \nonumber \sum_{l=-m_0}^{m_0-1}  2^{kl} |E_l|^{-1/p} \left( \int_{E_l} |D^{k} f|^p\right)^{1/p}  
 \\ &\nonumber
 \leq C(n,k) \sum_{l=-m_0}^{m_0-1} \left( \int_{E_l} |D^{k} f|^p\right)^{1/p}
 \\ &
 \leq C m_0^{\frac{p-1}{p}}\left( \sum_{l=-m_0}^{m_0-1} \int_{E_l} |D^{k} f|^p\right)^{1/p}.\label{eq16}
 \end{align}
	We utilize the fact $\displaystyle\sum_{l=-\infty}^{\infty}\chi_{E_l}(y)\leq k+4,~\forall y\in \mathbb{R}^n$ again in order to get
	\begin{align}\label{eq17}
\left(\sum_{l=-m_0}^{m_0-1} \int_{E_l} |D^{k} f|^p\right)^{1/p}\leq (k+4)	\left( \int_{\mathbb{R}^n} |D^{k} f|^p\right)^{1/p}.
	\end{align}
From  \eqref{eq17}, \eqref{eq16}, and \eqref{eq16a}, we get
\begin{equation}\label{eq18}
\sum_{l=-m_0}^{m_0-1}\left|\sum_{j=0}^{k+1} a_j\fint_{\Omega_{j+l}}   f \right|\leq C(k,n) \| f\|_{\dot W^{s,p}}.
\end{equation}
	Thus, we obtain another version of \eqref{*} as follows: 
\begin{align}\label{**}
|f(0)|&\leq C 2^{-m_0\min\{n-\alpha,\eta\}} \left(\int_{\mathbb{R}^n}\frac{|f(y)|}{(|y|+1)^\alpha}dy +\|f\|_{\mathcal{\dot{C}}^{\eta}}\right)+C m_0^{\frac{p-1}{p}}  \| f\|_{\dot W^{s,p}}.
\end{align}
By the same argument as above (after \eqref{*}), we get the proof of the case $s_1=0$. This completes the proof of Theorem \ref{thm4}.
		\end{proof}

\end{document}